\nonstopmode \numberwithin{equation}{section}
\newcommand{\ubrace}[2]{\mathord{\mathpalette\ubrace@{{#1}{#2}}}}
\newcommand{\ubrace@}[2]{\ubrace@@#1#2}
\newcommand{\ubrace@@}[3]{
	\underbrace{#1#2}_{#3}%
}
\theoremstyle{definition}
\newtheorem{Thm}{Theorem}[section]
\newtheorem{Cor}[Thm]{Corollary}
\newtheorem{Lem}[Thm]{Lemma}
\theoremstyle{definition}
\newtheorem{Def}[Thm]{Definition}
\newtheorem{Rem}[Thm]{Remark}
\begin{document}
	\bibliographystyle{amsplain}
	\title{Center and torsion of a quotient of the group of piecewise linear homeomorphisms of the real line}
	\author{Swarup Bhowmik}
	\address{Swarup Bhowmik, Department of Mathematics,
		Indian Institute of Technology Kharagpur, Kharagpur - 721302, India.}
	\email{swarup.bhowmik@iitkgp.ac.in}
	\begin{abstract}
		In this article, we show that the group comprising piecewise-linear homeomorphisms of the real line with bounded slopes is not simple. Furthermore, we establish that a quotient of this group is torsion-free, and importantly, the center of that quotient group is indeed trivial.
	\end{abstract}
	\thanks{2020 \textit{Mathematics Subject Classification.} 20F28, 20F65}
	\thanks{\textit{Key words and phrases.} PL-homeomorphism groups, quasi-isometries of real line.}
	\thanks{The author of this article acknowledges the financial support from Inspire, DST, Govt. of India as a Ph.D. student (Inspire) sanction
		letter number: No. DST/INSPIRE Fellowship/2018/IF180972}
	\maketitle
	\pagestyle{myheadings}
	\markboth{S. Bhowmik}{Center and torsion of a quotient of the group of piecewise linear homeomorphisms of the real line}
	\bigskip
	\section{Introduction}
	The concept of quasi-isometry is one of the fundamental concepts in geometric group theory.  For a given metric space $X$, the group of quasi-isometries from $X$ to itself is denoted as $QI(X)$ and serves as a quasi-isometric invariant of $X$. Our primary interest lies in cases where $X$ is quasi-isometrically equivalent to a finitely generated group $\Gamma$ equipped with a word metric.  In the context of finitely generated groups, selecting a finite generating set $S\subset \Gamma$ introduces the word metric, denoted as $dS$, which endows $\Gamma$ with a metric space structure. In the field of studying the coarse geometry of $\Gamma$, $QI(\Gamma)$ plays a vital role as a significant invariant. However, the determination of $QI(\Gamma)$ for arbitrary groups remains a challenging task, and very limited knowledge is available concerning these groups in general. 
	
	It appears that for a very few families of groups $\Gamma$, $QI(\Gamma)$ has been explicitly investigated, for example, solvable Baumslag-Solitar groups $BS(1,n)$ \cite[Theorem 7.1]{Farb Mosher}, the groups $BS(m,n),~1<m<n$ \cite[Theorem 4.3]{Whyte}, irreducible lattices in semisimple Lie groups (see \cite{Farb} and the references therein) etc. Even the exploration of $QI(\mathbb{R})$, the group of quasi-isometries of the real line remains relatively limited. Nevertheless, Gromov and Pansu demonstrated in \cite[\textsection 3.3.B]{Gromov Pansu} that the map Bilip$(\mathbb{R})\rightarrow QI(\mathbb{R})$ is surjective and established that $QI(\mathbb{R})$ is an infinite-dimensional group.
	
	In the context of coarse geometry, the group of quasi-isometries of the real line, holds a crucial importance. It is known that several remarkable geometric groups can be embedded in $QI(\mathbb{R})$ including:- (i) $\widetilde{Diff(\mathbb{S}^{1})}$ and $\widetilde{PL(\mathbb{S}^{1})}$, (ii) $PL_k(\mathbb{R})$, (iii) the Thompson's group $F$, (iv) the free group of rank $c$ (the continuum). Notably, the group $PL_k(\mathbb{R})$ is simple (see \cite{Brin Squier} and Theorem 3.1 of \cite{Epstein}). The group $\widetilde{Diff(\mathbb{S}^{1})}$ contains a free group of rank equal to the continuum \cite{Grabowski}. The Thompson's group $F$ exhibits numerous remarkable properties, such as its commutator subgroup $[F,F]$ being a simple group, and $F$ not containing a non-abelian free subgroup \cite{Cannon Floyd Parry}. Hence, $QI(\mathbb{R})$ contains a diverse set of subgroups with exceptional properties.
	
	An explicit characterization of all quasi-isometries of $\mathbb{R}$ was provided in \cite{Sankaran}, which established the existence of a surjective homomorphism from $PL_\delta(\mathbb{R})$ to $QI(\mathbb{R})$, where $PL_\delta(\mathbb{R})$ be the set of all piecewise-linear homeomorphisms $f$ of $\mathbb{R}$ with bounded slopes. In \cite{Sankaran}, Sankaran provides a concise and well-understood representation of the elements of $QI(\mathbb{R})$. But there is currently no complete explicit representation of $QI(\mathbb{R})$ though some attempts have been taken in \cite{Bhowmik Chakraborty 2}. This is an almost complete characterization of the group $QI(\mathbb{R})$.
	
	In this research article, our main focus is on the set $PL_\delta(\mathbb{R}_{+})/H$ where $H=\Big\{f\in PL_\delta(\mathbb{R}_{+}): \displaystyle\lim_{x\rightarrow\infty}\displaystyle\frac {f(x)}{x}=1\Big\}$ analogous to the defintion of $H$ in \cite{Ye Zhao}, which holds significant importance in characterizing the group $QI(\mathbb{R}_{+})/H$. We established three essential algebraic and geometric properties of the group $PL_\delta(\mathbb{R}_{+})/H$, which are crucial for the broader study of the group $QI(\mathbb{R}_{+})/H$. Recognizing the inherent challenges in characterizing the geometric properties of $QI(\mathbb{R})$, our study on $PL_\delta(\mathbb{R}_{+})/H$ offers a more reliable approach to explore these fundamental algebraic and geometric properties. To be more precise, we present the following results. The notations used in this context will be explained in more detail in \textsection\ref{Preliminaries}.
	\begin{Thm}\label{Non-simple}
		\textit{The group $PL_\delta^{+}(\mathbb{R})$ (or $PL_\delta([0,+\infty])$) is not simple.}
	\end{Thm}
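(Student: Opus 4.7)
The plan is to exhibit the subgroup $H$ itself, viewed inside $PL_\delta^+(\mathbb{R})=PL_\delta([0,+\infty])$, as a proper nontrivial normal subgroup; this immediately obstructs simplicity. Three verifications are required: that $H$ is a subgroup, that $H$ is proper and nontrivial, and that $H$ is normal.

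First, for the subgroup axioms, closure under composition follows from the factorisation
\[
\frac{(f\circ g)(x)}{x}=\frac{f(g(x))}{g(x)}\cdot\frac{g(x)}{x},
\]
together with the observation that $g(x)\to\infty$ forces $f(g(x))/g(x)\to 1$ whenever $f\in H$. For inverses, the substitution $y=f(x)$ turns $f(x)/x\to 1$ into $f^{-1}(y)/y\to 1$, since $x$ and $y$ tend to infinity together.

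Properness and nontriviality are then immediate: the linear map $x\mapsto 2x$ lies in $PL_\delta^+(\mathbb{R})$ but has limiting ratio $2\ne 1$, while any compactly supported PL perturbation of the identity (agreeing with $x$ outside a bounded interval) lies in $H$.

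The main work, and the step I expect to be the chief obstacle, is normality. Given $f\in H$ and an arbitrary $g\in PL_\delta^+(\mathbb{R})$ with slopes contained in $[1/K,K]$, I would deliberately avoid any piecewise formula for $g$, since $g$ may carry infinitely many breakpoints, and rely solely on the resulting bilipschitz bounds. Setting $y=g^{-1}(x)$ reduces the claim to showing $g(f(y))/g(y)\to 1$. The upper slope bound gives $|g(f(y))-g(y)|\le K|f(y)-y|$, while integrating the lower slope bound yields $g(y)\ge y/(2K)$ for all sufficiently large $y$. Combining these two estimates produces
\[
\Bigl|\frac{g(f(y))}{g(y)}-1\Bigr|\le 2K^{2}\,\Bigl|\frac{f(y)}{y}-1\Bigr|\longrightarrow 0,
\]
so $gfg^{-1}\in H$, completing the proof.
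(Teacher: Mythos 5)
Your proof is correct and follows essentially the same route as the paper: exhibiting $H=\{f:\lim_{x\to\infty}f(x)/x=1\}$ as a proper nontrivial normal subgroup, with normality checked through bi-Lipschitz/slope bounds exactly in the spirit of the paper's lemma (your estimate $|g(f(y))-g(y)|\le K|f(y)-y|$ together with $g(y)\ge y/(2K)$ is just a repackaging of the paper's bounded difference-quotient factorization). One small caveat: $PL_\delta^{+}(\mathbb{R})$ and $PL_\delta([0,+\infty])$ are not the same group (the paper treats the former via the splitting $PL^{+}_{\delta}(\mathbb{R})=PL_\delta(\mathbb{R}_{+})\times PL_\delta(\mathbb{R}_{-})$), but your identification is harmless here, since your argument with $H$ defined by the ratio limit at $+\infty$ runs verbatim in either group.
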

    \begin{Thm}\label{Torsion-free}
    	\textit{The quotient group $PL_\delta(\mathbb{R}_{+})/H$ is torsion-free.}
    \end{Thm}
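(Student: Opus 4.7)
The plan is to show directly that if $f \in PL_\delta(\mathbb{R}_{+})$ satisfies $f^n \in H$ for some $n \geq 1$, then $f \in H$. Setting $\phi(x) := f(x)/x$, the condition $f \in H$ becomes $\phi(x) \to 1$ as $x \to \infty$, and the starting point will be the telescoping identity
\[
\frac{f^n(x)}{x} \;=\; \prod_{j=0}^{n-1} \frac{f(f^j(x))}{f^j(x)} \;=\; \prod_{j=0}^{n-1} \phi(f^j(x)),
\]
whose left-hand side tends to $1$ by the hypothesis $f^n \in H$.

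The key observation I will exploit is that, because $f$ is a strictly increasing homeomorphism of $\mathbb{R}_{+}$, every forward orbit $x, f(x), f^2(x), \ldots$ is monotone: from $f(x) \geq x$ (equivalently $\phi(x) \geq 1$) induction via the monotonicity of $f$ gives $f^{j+1}(x) \geq f^j(x)$ for all $j$, and the symmetric statement holds when $\phi(x) \leq 1$. Hence in the product above either every factor is $\geq 1$ or every factor is $\leq 1$, and keeping only the $j=0$ term then yields, respectively, $\phi(x) \leq f^n(x)/x$ or $\phi(x) \geq f^n(x)/x$. Both cases combine into the single pointwise bound
\[
|\phi(x) - 1| \;\leq\; \left| f^n(x)/x - 1 \right|,
\]
and since the right-hand side tends to $0$, so does the left, proving $\phi(x) \to 1$ and hence $f \in H$.

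The only step requiring real care is the orbit-monotonicity claim: a priori one might worry that $\phi$ oscillates around $1$ along an orbit so that the factors in the product nearly cancel while each individually stays bounded away from $1$, but the monotonicity of $f$ rules this out and is precisely what makes the pointwise bound possible. I would remark that the argument uses the bounded-slope and piecewise-linear hypotheses only through the fact that $H$ is a normal subgroup of $PL_\delta(\mathbb{R}_{+})$, i.e.\ so that the quotient is a group at all; the torsion-freeness itself is actually a general feature of any group of increasing homeomorphisms of $\mathbb{R}_{+}$ modulo its asymptotic-to-identity subgroup.
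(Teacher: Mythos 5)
Your proof is correct, and it is organized differently from the paper's. You prove the direct implication $f^{n}\in H\Rightarrow f\in H$ by a single pointwise estimate: the telescoping identity $f^{n}(x)/x=\prod_{j=0}^{n-1}f(f^{j}(x))/f^{j}(x)$ together with the observation that, since every element of $PL_\delta(\mathbb{R}_{+})$ is increasing and fixes $0$, the orbit of each fixed $x>0$ is monotone, so all factors lie on the same side of $1$ and hence
\[
\Bigl|\tfrac{f(x)}{x}-1\Bigr|\;\le\;\Bigl|\tfrac{f^{n}(x)}{x}-1\Bigr|
\qquad\text{for all }x>0,
\]
which kills all cases at once. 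The paper instead argues contrapositively ($g\notin H\Rightarrow g^{r}\notin H$) through the invariant $S_{g}$, splitting into the cases $S_{g}=[m,1]$, $[1,M]$, $[m,M]$, extracting sequences and subsequences in each, and invoking orbit monotonicity explicitly only in the mixed case $m<1<M$; the first two cases are settled there by asserting the existence of suitable subsequences without detail. Your version buys uniformity and completeness: no case analysis on $S_{f}$, no subsequence extraction, and an explicit quantitative bound, all justified line by line; it also makes transparent (as you note) that only monotonicity of the homeomorphisms and the normality of $H$ (needed to form the quotient, and this is where the bounded-slope hypothesis enters) are used. What the paper's route buys is continuity with the rest of the article, since the invariant $S_{f}$ and its interval structure are developed there and reused elsewhere. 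The only caveat worth flagging in your write-up is cosmetic: the pointwise dichotomy ($f(x)\ge x$ or $f(x)\le x$) is per fixed $x$, not global, and your argument indeed only needs it pointwise, but you should state that explicitly to forestall misreading.
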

	\begin{Thm}\label{Center}
		\textit{The center of the quotient group $PL_\delta(\mathbb{R}_{+})/H$ is trivial.}
	\end{Thm}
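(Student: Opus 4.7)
My plan is a contrapositive argument: given $f\in PL_\delta(\mathbb{R}_+)$ with $f\notin H$, I will exhibit $g\in PL_\delta(\mathbb{R}_+)$ such that the commutator $[g,f]=gfg^{-1}f^{-1}$ does not lie in $H$, and hence $\bar f$ is not central in the quotient. Set $\phi_f(y):=f(y)/y$ and $\psi_g(y):=g(y)/y$; because slopes are bounded, both functions are bounded away from $0$ and from $\infty$. The hypothesis $f\notin H$ means $\phi_f(y)\not\to 1$, so some subsequence $y_n\to\infty$ satisfies $\phi_f(y_n)\to q$ for a value $q\neq 1$. The strategy is to build $g$ whose $\psi_g$ has a nontrivial asymptotic profile on the logarithmic scale, so that the shift of this profile by $\log_\lambda q$ inherited from the conjugation action cannot be absorbed.

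Concretely, fix $\lambda>1$ with $\log_\lambda q$ irrational, and let $g$ be the piecewise-linear homeomorphism that is the identity on $[0,\lambda^{k_0}]$ and has alternating slopes $\alpha$ on $[\lambda^{2k},\lambda^{2k+1}]$ and $\beta$ on $[\lambda^{2k+1},\lambda^{2k+2}]$ (with $\alpha\neq\beta>0$) for $k$ large. A short telescoping computation gives $\psi_g(\lambda^k)\to L_0=(\alpha+\beta\lambda)/(\lambda+1)$ for even $k$ and $\to L_1=(\beta+\alpha\lambda)/(\lambda+1)$ for odd $k$, with $L_0\neq L_1$; between breakpoints $\psi_g(\lambda^{k+t})$ interpolates affinely in $\lambda^{-t}$, so asymptotically $\psi_g(y)$ is governed by a continuous, non-constant, $2$-periodic function $\Psi$ of $\log_\lambda y\bmod 2$. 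Unwinding the commutator with $u:=g^{-1}(f^{-1}(x))$, pure algebra yields
\begin{equation*}
\frac{[g,f](x)}{x}\;=\;\frac{\phi_f(u)\,\psi_g\bigl(u\,\phi_f(u)\bigr)}{\psi_g(u)\,\phi_f\bigl(u\,\psi_g(u)\bigr)}.
\end{equation*}

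I would then split on whether $\phi_f$ converges. If $\phi_f\to c\neq 1$, the two $\phi_f$-factors cancel asymptotically and the ratio reduces to $\psi_g(cu)/\psi_g(u)+o(1)$; choosing a subsequence along which $\log_\lambda u\bmod 2\to\theta$, this tends to $\Psi(\theta+\log_\lambda c)/\Psi(\theta)$. If this limit were $1$ for every attainable $\theta$, the $2$-periodic $\Psi$ would admit the additional period $\log_\lambda c$; combined with $\log_\lambda c\notin 2\mathbb{Z}$, the group of periods of $\Psi$ would be dense in $\mathbb{R}$, forcing $\Psi$ constant and contradicting $L_0\neq L_1$. Hence some phase $\theta$ produces a commutator ratio bounded away from $1$, so $[g,f]\notin H$.

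If $\phi_f$ does not converge, the same $g$ is used, but now one must select $u_n\to\infty$ along which \emph{both} $\phi_f(u_n)\to q$ and the nuisance factor $\phi_f\bigl(u_n\psi_g(u_n)\bigr)$ is controlled. I expect this to be the main technical obstacle. I plan to address it by exploiting the long $\log$-length of slowly-oscillating level sets of $\phi_f$ (which realise every phase of $\log_\lambda u\bmod 2$ within a single bump), and, for rapidly-oscillating $\phi_f$, by choosing $\lambda$ so that $\log_\lambda$ of the centres of the $\phi_f$-bumps equidistributes modulo $2$, ensuring the required phase is available. Once the phase-selection is in place, the same periodicity obstruction as in the convergent case produces $[g,f]\notin H$, contradicting the assumed centrality of $\bar f$ and completing the proof.
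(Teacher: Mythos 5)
Your reduction of centrality to showing $[g,f]\notin H$ is correct, and the algebraic identity
\begin{equation*}
\frac{[g,f](x)}{x}=\frac{\phi_f(u)\,\psi_g\bigl(u\,\phi_f(u)\bigr)}{\psi_g(u)\,\phi_f\bigl(u\,\psi_g(u)\bigr)},\qquad u=g^{-1}(f^{-1}(x)),
\end{equation*}
is right; the convergent case ($\phi_f\to c\neq 1$), with the log-periodic $g$ and the irrationality/period-group argument, can be made to work. But the proof as written has a genuine gap exactly where you flag it: when $\phi_f$ does not converge you must simultaneously force $\phi_f(u_n)\to q\neq 1$, pin down the phase $\log_\lambda u_n \bmod 2$, \emph{and} control the nuisance factor $\phi_f\bigl(u_n\psi_g(u_n)\bigr)$, and your plan for this ("slowly-oscillating level sets", equidistribution of "bump centres") presupposes structure on $\phi_f$ that a general element of $PL_\delta(\mathbb{R}_+)\setminus H$ need not have --- $\phi_f$ is merely a bounded function with a subsequential limit $\neq 1$, and nothing prevents it from oscillating so that at every phase where $\Psi(\theta+\log_\lambda q)\neq\Psi(\theta)$ the denominator factor drifts to compensate. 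Since this case is only sketched, the argument is incomplete.

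The reason the difficulty arises is that you fix $g$ in advance (a universal log-periodic profile) and then try to align it with $f$ by choosing subsequences. The paper does the opposite: it builds $g$ \emph{adapted to} $f$. Since $f\notin H$, one extracts a sparse sequence $a_n\to\infty$ with $a_{n+1}>3f(a_n)$ and $f(a_n)/a_n$ bounded away from $1$ (via Lemma 3.1 of Chakraborty), and defines $g$ to be the identity outside the blocks $I_k=[a_k,a_{k+1}]$, to preserve each $I_k$, and to have a single breakpoint at $f(a_k)$ with $g(f(a_k))=\tfrac{1}{2}(a_k+f(a_k))$; the slopes stay in $[\tfrac12,\tfrac54]$, so $g\in PL_\delta(\mathbb{R}_+)$. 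Because $g(a_k)=a_k$, one gets $(f\circ g)(a_k)=f(a_k)$ while $(g\circ f)(a_k)=\tfrac12(a_k+f(a_k))$, so $\bigl|\frac{(fg)(a_k)}{a_k}-\frac{(gf)(a_k)}{a_k}\bigr|=\tfrac12\bigl|\frac{f(a_k)}{a_k}-1\bigr|\nrightarrow 0$, and the mod-$H$ criterion ($f\equiv g$ mod $H$ iff $f(x)/x-g(x)/x\to 0$) gives $fg\not\equiv gf$ with no case analysis and no phase-selection problem. If you want to salvage your scheme, replace the universal $g$ by such an $f$-adapted one; that eliminates the uncontrolled factor $\phi_f\bigl(u\psi_g(u)\bigr)$ entirely.
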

	
	In this paper, we will consistently denote the Euclidean metric as $d$ throughout our discussions. Additionally, for the sake of convenience in notation, we will use $G$ to represent $PL_\delta(\mathbb{R}_{+})$.

	\section{Preliminaries}\label{Preliminaries}
	\subsection{$PL_\delta$-homeomorphisms of $\mathbb{R}$} Let $f:\mathbb{R}\rightarrow\mathbb{R}$ be a homeomorphism of the real numbers. Let us denote $B(f)$ as the set of breakpoints of $f$, which are points where $f$ lacks a derivative, and let $\Lambda(f)$ denote the set of slopes associated with $f$, defined as $\Lambda(f)=\{f'(t):t\in \mathbb{R}\setminus B(f)\}$. It is important to note that if $f$ is piecewise differentiable, then $B(f)$ is a discrete subset of the real numbers $\mathbb{R}$.
	\begin{Def}
	We define a subset $\Lambda$ of $\mathbb{R}^{*}$ (the set of non-zero real numbers) as \textit{bounded} if there exists an $K>1$ such that $K^{-1}<|\lambda|<K$ for all $\lambda\in\Lambda$. Furthermore, for a homeomorphism $f$ of $\mathbb{R}$ that is piecewise differentiable, we say that $f$ has bounded slopes if the set $\Lambda(f)$ is bounded.
\end{Def}
    Let $PL_\delta(\mathbb{R})$ be the set comprising all piecewise-linear homeomorphisms $f$ of $\mathbb{R}$ such that $\Lambda(f)$ is bounded. Notably, $PL_\delta(\mathbb{R})$ forms a subgroup within the larger group $PL(\mathbb{R})$, which encompasses all piecewise-linear homeomorphisms of $\mathbb{R}$.

   Recall that a map $f:(X,d)\rightarrow (X,d)$ is \textit{bi-Lipschitz} if there exists
   a constant $k\geq 1$ such that $\displaystyle\frac {1}{k}d(x, y)\leq d(f(x),f(y))\leq kd(x,y).$ The set of all such bi-Lipschitz homeomorphisms from $X$ to itself is denoted as Bilip$(X)$. Importantly, it is easy to see that if a function $f\in PL_\delta(X)$, then $f\in$ Bilip($X$). (See Lemma \ref{Bilip} below.)
   
\subsection{Quasi-isometry} Consider a metric space $(X, d).$ A quasi-isometry of $X$ is a map $f:X\rightarrow X$ for which there exist constants $\mu\geq 1$, $\delta>0$, and $M>0$ such that the following criteria are met:\\ (i) For any pair of points $x,y\in X$, $-\delta+\Big(\displaystyle\frac {1}{\mu}\Big)d(x,y)\leq d(f(x),f(y))\leq \mu d(x,y)+\delta$, (ii) every point $x\in X$ is at a distance not greater than $M$ from the entire set $f(X)$.

It is note that when $\delta=0$, the quasi-isometry $f$ is classified as a bi-Lipschitz mapping. The collection of all quasi-isometries of $X$ forms a set that is closed under composition, yet it does not constitute a group because quasi-isometries are not necessarily one-to-one or onto, and therefore, they may lack inverses. As an illustration, the floor function, which maps $x\mapsto\lfloor x \rfloor$, is a quasi-isometry of the real numbers $\mathbb{R}$ but does not have an inverse.

Two mappings $f$ and $g$ : $X\rightarrow X$ are considered quasi-isometrically equivalent if there exists a constant $C>0$ such that $d(f(x), g(x))<C$, for all $x\in X$. We denote the equivalence class of a quasi-isometry $f:X\rightarrow X$ as $[f]$, although sometimes $f$ is used instead of $[f]$ for notation convenience. The collection of all equivalence classes of quasi-isometries of $X$, denoted as $QI(X)$, forms a group under composition, where $[f].[g]=[f\circ g]$, for $[f]$ and $[g]\in QI(X).$

Moreover, any quasi-isometry $f:X\rightarrow X$ induces an isomorphism from $QI(X)$ to $QI(X)$, defined as $[g]\rightarrow [f\circ g\circ h]$, where $h:X\rightarrow X$ is a quasi-inverse of $f$. For instance, the map $x\rightarrow [x]$ is a quasi-isometry from $\mathbb{R}$ to $\mathbb{Z}$. As a result, $QI(\mathbb{R})$ is isomorphic to $QI(\mathbb{Z})$ as a group. The quasi-isometries of the real line can be effectively represented by PL-homeomorphisms of the real line characterized by bounded slopes \cite{Sankaran}.

\subsection{Notations} We denote $Diff(\mathbb{S}^{1})$ as the group of diffeomorphisms and $PL(\mathbb{S}^{1})$ as the group of piecewise linear homeomorphisms of $\mathbb{S}^{1}$. Importantly, for any homeomorphism $f$ of $\mathbb{S}^{1}$ can be lifted to a homeomorphism $\tilde{f}$ of $\mathbb{R}$, ensuring the commutativity of the diagram below:
\begin{center}
	\includegraphics[scale=1.2]{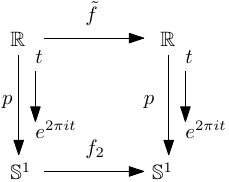}
\end{center}
We denote $\widetilde{Diff(\mathbb{S}^{1})}$ and $PL(\mathbb{S}^{1})$ as the groups of all lifts of elements from $Diff(\mathbb{S}^{1})$ and $PL(\mathbb{S}^{1})$ respectively. Additionally, we represent the group of piecewise linear homeomorphisms of $\mathbb{R}$ with compact support as $PL_k(\mathbb{R})$.


	\section{Structure of $PL_\delta(\mathbb{R})$}\label{Sturcture}
	Let $PL^{+}_{\delta}(\mathbb{R})$ be the subgroup of $PL_\delta(\mathbb{R})$ consisting of orientation preserving PL-homeomorphisms. Then we have $PL_\delta(\mathbb{R})$ = $PL^{+}_{\delta}(\mathbb{R})\rtimes <t>$
	where $t$ is the reflection of $\mathbb{R}$ about the origin. Let $PL_\delta(\mathbb{R}_{+})$ (resp. $PL_\delta(\mathbb{R}_{-})$) be the piecewise-linear homeomorphism with bounded slopes of the half line $[0,\infty)$ (resp. $(-\infty,0]$). Note that $PL^{+}_{\delta}(\mathbb{R})=PL_\delta(\mathbb{R}_{+})\times PL_\delta(\mathbb{R}_{-})$. In order to characterize the properties of the group $PL^{+}_{\delta}(\mathbb{R})$, we introduce a new invariant $S_f$ for any $f\in PL_\delta(\mathbb{R})$ as follows:\\ $S_{f}=\Big\{\displaystyle\lim_{n\rightarrow\infty}\frac {f(x_n)}{x_n}:x_n\rightarrow\infty~\text{and}~ \frac {f(x_n)}{x_n}~\text{converges}\Big\}$ analogous to the definition of $S_{[g]}$ in \cite{Bhowmik Chakraborty 2}. Now, we established a crucial relationship between the group $PL_\delta(\mathbb{R})$ and Bilip$(\mathbb{R})$ which unveils the structure of $PL_\delta(\mathbb{R})$.

	\begin{Lem}\label{Bilip}
		\textit{Let $f$ be a piecewise differentiable homeomorphism of $\mathbb{R}$ with $\Lambda(f)$ is bounded. Then $f$ is bi-Lipschitz.}
	\end{Lem}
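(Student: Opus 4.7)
The plan is to exploit the hypothesis $\Lambda(f)$ is bounded to produce a uniform Lipschitz constant by a piecewise application of the mean value theorem. Specifically, fix $K>1$ with $K^{-1}<|f'(t)|<K$ for every $t\in\mathbb{R}\setminus B(f)$. Since $f$ is a homeomorphism of $\mathbb{R}$, it is strictly monotonic; hence on any subinterval where $f$ is differentiable the derivative has constant sign, and the sign is the same across all such subintervals.

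Next I would fix $x<y$ in $\mathbb{R}$ and use that $B(f)$ is a discrete subset of $\mathbb{R}$ (as remarked in the paper), so $B(f)\cap[x,y]$ is finite. Listing these breakpoints in order together with the endpoints gives a partition $x=x_0<x_1<\cdots<x_n=y$ such that $f$ is continuous on each $[x_i,x_{i+1}]$ and differentiable on each open subinterval $(x_i,x_{i+1})$. By the mean value theorem, for every $i$ there exists $c_i\in(x_i,x_{i+1})$ with $f(x_{i+1})-f(x_i)=f'(c_i)(x_{i+1}-x_i)$, and the bound on $\Lambda(f)$ gives $K^{-1}(x_{i+1}-x_i)\le |f(x_{i+1})-f(x_i)|\le K(x_{i+1}-x_i)$.

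Finally, since all the increments $f(x_{i+1})-f(x_i)$ share the same sign (monotonicity), I can telescope without cancellation:
\[
|f(y)-f(x)|=\sum_{i=0}^{n-1}|f(x_{i+1})-f(x_i)|=\sum_{i=0}^{n-1}|f'(c_i)|(x_{i+1}-x_i),
\]
and the pointwise bound yields $K^{-1}(y-x)\le |f(y)-f(x)|\le K(y-x)$. This is exactly the bi-Lipschitz condition with constant $k=K$, completing the argument. The proof is essentially routine; the only point that requires mild care is the discreteness of $B(f)$ inside a bounded interval, which justifies the finite partition and hence the piecewise use of MVT, so I do not expect a substantive obstacle here.
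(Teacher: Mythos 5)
Your proof is correct and follows essentially the same route as the paper: partition $[x,y]$ at the (finitely many, by discreteness of $B(f)$) breakpoints, apply the mean value theorem on each subinterval, and sum the increments using the uniform slope bound. Your write-up is in fact slightly more careful than the paper's, since you explicitly invoke strict monotonicity to telescope the absolute values without cancellation, but the underlying argument is the same.
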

\begin{proof}
	Suppose that $\displaystyle\frac {1}{K}<\Lambda(f)<K$. If $f$ is differentiable everywhere, then clearly by defintion of $\Lambda(f)$, we have
	\begin{equation}\label{Bilip 1}
		\displaystyle\frac {1}{K}d(x,y)\leq d(f(x),f(y))\leq Kd(x,y).
	\end{equation} 
	Suppose that $B(f)\neq \phi$. Let $x_1,x_2,\cdots,x_k$ be the points where $f$ is non-differentiable such that $x_1<x_2<\cdots<x_k$. Thus within each interval $(x_i,x_{i+1})$, for $i=1,2,\cdots,k-1$, $f$ is differentiable. Then by applying the Mean Value Theorem in each of these intervals there exist $c_i\in(x_i,x_{i+1})$ such that $|f(x_{i+1}-f(x_i))|=|f'(c_i)||x_{i+1}-x_i|$. Since $\Lambda(f)\subset\Big(\displaystyle\frac {1}{K},K\Big)$, therefore, we have
	\begin{equation}\label{Bilip 2}
		\displaystyle\frac {1}{K}d(x_{i},x_{i+1})\leq d(f(x_i),f(x_{i+1}))\leq Kd(x_i,x_{i+1}),~  i=1,2,\cdots,k-1.
	\end{equation} 
	Now for any $y>x$, where $x<a_1<a_2<\cdots<a_k<y$, applying the Mean Value Theorem once more, we can verify that $f(y)-f(x)=\displaystyle\sum_{i=0}^{k}f'(z_i)(a_{k+1}-a_k)$, for some $z_i\in(a_{i},a_{i+1})$ where $a_0=x$ and $a_{k+1}=y.$ As $\displaystyle\frac {1}{K}<\Lambda(f)<K$, it follows that
	\begin{equation}\label{Bilip 3}
		\displaystyle\frac {1}{K}d(x,y)\leq d(f(x),f(y))\leq Kd(x,y)
	\end{equation} 
Therefore, by combining (\ref{Bilip 1}),(\ref{Bilip 2}) and (\ref{Bilip 3}), we can conclude that the function $f$ is bi-Lipschitz. 
\end{proof}
\begin{Cor}
	It can be easiliy verified from the above Lemma and Lemma 3.1 of \cite{Bhowmik Chakraborty 2} that for any $f\in PL_\delta(\mathbb{R})$, $S_f$ is either a singleton set or a closed and bounded interval.
\end{Cor}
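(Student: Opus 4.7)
The approach is to combine the bi-Lipschitz bound from Lemma \ref{Bilip} with an intermediate value argument to show that $S_f$ is simultaneously bounded, closed, and connected as a subset of $\mathbb{R}$; the standard trichotomy for such subsets then forces $S_f$ to be either a singleton or a closed bounded interval.

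First I would apply Lemma \ref{Bilip} to obtain a constant $K > 1$ satisfying $K^{-1}|x - y| \leq |f(x) - f(y)| \leq K|x - y|$ for all $x, y \in \mathbb{R}$. Taking $y = 0$ and dividing by $|x|$, the ratio $\varphi(x) := f(x)/x$ satisfies $|\varphi(x)| \in [K^{-1} + o(1),\, K + o(1)]$ as $|x| \to \infty$, so every cluster value in $S_f$ lies in the compact set $[-K, -K^{-1}] \cup [K^{-1}, K]$. Hence $S_f$ is bounded. It is also closed by construction, being the set of subsequential limits of $\varphi$ along $x \to \infty$, equivalently $S_f = \bigcap_{N > 0} \overline{\{\varphi(x) : x \geq N\}}$.

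The main content is connectedness, and this is essentially what Lemma 3.1 of \cite{Bhowmik Chakraborty 2} supplies in the present setting. Given $a, b \in S_f$ with $a < b$ and any $c \in (a, b)$, pick sequences $u_n \to \infty$ and $v_n \to \infty$ with $\varphi(u_n) \to a$ and $\varphi(v_n) \to b$, interleaved so that $u_n < v_n < u_{n+1}$. For $n$ sufficiently large we have $\varphi(u_n) < c < \varphi(v_n)$, and since $\varphi$ is continuous on $(0, \infty)$ (as $f$ is continuous and $x > 0$), the intermediate value theorem yields $w_n \in (u_n, v_n)$ with $\varphi(w_n) = c$. Then $w_n \to \infty$ and $\varphi(w_n) \to c$, so $c \in S_f$, proving connectedness.

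Combining these three properties, $S_f$ is a closed, bounded, connected subset of $\mathbb{R}$, hence either a single point or a closed interval $[m, M]$. The only point of care is the orientation-reversing case, where $\varphi(x)$ is eventually negative; this is handled identically by working within the appropriate sign-component, or by appealing to the decomposition $PL_\delta(\mathbb{R}) = PL_\delta^+(\mathbb{R}) \rtimes \langle t \rangle$ recalled at the start of Section \ref{Sturcture}. I do not expect any genuine obstacle in the argument, which matches the author's description of the claim as an easy verification from the two cited ingredients.
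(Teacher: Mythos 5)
Your argument is correct, but it is not quite what the paper does: the paper gives no direct argument at all, instead quoting Lemma 3.1 of \cite{Bhowmik Chakraborty 2}, which establishes the point-or-interval structure of the limit set at the level of quasi-isometry classes (so its connectedness step must be a coarse one, since a representative of a QI class need not be continuous), and then feeds Lemma \ref{Bilip} into it to place $f$ in the scope of that lemma. You instead re-prove the content of that cited lemma in the present, much nicer setting: since $f\in PL_\delta(\mathbb{R})$ is an honest homeomorphism, $\varphi(x)=f(x)/x$ is continuous on $(0,\infty)$ and you can invoke the genuine intermediate value theorem to get connectedness of $S_f$, with Lemma \ref{Bilip} supplying boundedness and the description $S_f=\bigcap_{N>0}\overline{\{\varphi(x):x\geq N\}}$ supplying closedness. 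This is a legitimate and more self-contained route; what the paper's citation buys is uniformity (the same statement for arbitrary quasi-isometries, where your IVT step is unavailable), while your version buys independence from the external reference. Two small points you should still record: boundedness of $|\varphi|$ away from $0$ and $\infty$ plus Bolzano--Weierstrass gives $S_f\neq\emptyset$, which is needed before the trichotomy ``closed, bounded, connected $\Rightarrow$ point or interval'' can be read off (the empty set is also closed, bounded and connected); and in the orientation-reversing case the correct inclusion is $S_f\subset[-K,-K^{-1}]$, with the IVT argument run inside that single sign component, exactly as you indicate.
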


\section{Algebraic and Geometric properties}
In this section, we begin by demonstrating that the group $PL_\delta(\mathbb{R}_{+})$ is not simple by providing a normal subgroup $H$ defined in the Introduction. Subsequently, utilizing the invariant $S_f$, we conclude that the group $PL_\delta(\mathbb{R}_{+})/H$ is torsion-free. A fundamental preliminary aspect for exploring the algebraic properties of $PL_\delta(\mathbb{R}_{+})/H$ is to identify the center of $PL_\delta(\mathbb{R}_{+})/H$. Consequently, we establish that the center of the group $PL_\delta(\mathbb{R}_{+})/H$ is indeed trivial.

\begin{Lem}
	\textit{The set $H$ is a proper normal subgroup of $G$.}
\end{Lem}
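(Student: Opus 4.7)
The plan is to verify three things in turn: subgroup, properness, and normality.

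First I would check that $H$ is a subgroup of $G$. The identity lies in $H$ trivially. For closure under composition and inverse, let $f,g\in H$. Writing $\frac{f(g^{-1}(x))}{x}=\frac{f(g^{-1}(x))}{g^{-1}(x)}\cdot\frac{g^{-1}(x)}{x}$ and noting that $g^{-1}(x)\to\infty$ as $x\to\infty$ (since $g$ is a homeomorphism of $[0,\infty)$), the first factor tends to $1$ because $f\in H$. For the second factor, substitute $y=g^{-1}(x)$ to get $y/g(y)\to 1$ since $g\in H$. Hence $fg^{-1}\in H$.

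Properness is immediate: the map $x\mapsto 2x$ belongs to $G$ (single slope $2$, so $\Lambda$ is bounded), but $\lim_{x\to\infty}2x/x=2\neq 1$, so it lies in $G\setminus H$.

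The main content is normality, and this is the step I expect to require the most care. Given $g\in G$ and $f\in H$, I want to show $\lim_{x\to\infty}\frac{gfg^{-1}(x)}{x}=1$. Substituting $y=g^{-1}(x)$ reduces this to proving $\lim_{y\to\infty}\frac{g(f(y))}{g(y)}=1$. Here I invoke Lemma~\ref{Bilip}: there exists $K>1$ such that $\tfrac{1}{K}|a-b|\le|g(a)-g(b)|\le K|a-b|$ for all $a,b$. Since $g(0)=0$, this gives both the upper bound $|g(f(y))-g(y)|\le K|f(y)-y|$ and the lower bound $g(y)\ge y/K$ for all $y>0$. Combining these,
\begin{equation*}
\left|\frac{g(f(y))}{g(y)}-1\right|=\frac{|g(f(y))-g(y)|}{g(y)}\le\frac{K|f(y)-y|}{y/K}=K^{2}\left|\frac{f(y)}{y}-1\right|.
\end{equation*}
The right hand side tends to $0$ as $y\to\infty$ because $f\in H$, so the left hand side does as well, proving $gfg^{-1}\in H$.

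The only subtle point is the normality argument, where without the bi-Lipschitz control one could not conclude anything about $g(f(y))/g(y)$ since $g(y)/y$ itself need not converge for arbitrary $g\in G$ (by the corollary after Lemma~\ref{Bilip}, $S_g$ may be a nondegenerate interval). The trick is to bypass convergence of $g(y)/y$ entirely and instead estimate the \emph{difference} $g(f(y))-g(y)$ using bi-Lipschitz continuity, together with the linear lower bound on $g(y)$.
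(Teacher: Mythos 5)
Your proof is correct and takes essentially the same route as the paper: decompose the relevant ratios multiplicatively, exhibit $x\mapsto kx$ to get properness, and use the bi-Lipschitz control of Lemma \ref{Bilip} to handle the inverse and the conjugation. The only cosmetic differences are that you check the subgroup criterion in one step via $fg^{-1}$ and package normality as the single estimate $\bigl|\frac{g(f(y))}{g(y)}-1\bigr|\le K^{2}\bigl|\frac{f(y)}{y}-1\bigr|$ (conjugating by $g$ rather than $g^{-1}$), whereas the paper splits the conjugation into a bounded difference quotient times a factor tending to zero — the mechanism is the same.
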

\begin{proof}
	First we show that $H$ is a subgroup of $G$ and then we show that it is normal in $G$. For any  $f,g\in H$, we get
	\begin{equation*}
		\frac {f(g(x))}{x}-1=\frac {f(g(x))-g(x)}{g(x)}\frac {g(x)}{x}+\frac {g(x)-x}{x}
	\end{equation*}
	As $f,g\in H$ and since $x\rightarrow\infty$ implies $g(x)\rightarrow\infty$, therefore, when $x\rightarrow\infty$, $\displaystyle\frac {f(g(x))-g(x)}{g(x)}$, $\displaystyle\frac {g(x)-x}{x}\rightarrow 0$ and $\displaystyle\frac{g(x)}{x}$$\rightarrow$ 1. Then from above we get $fg\in H$.\\
	Now, without loss of generality, we can take that $f(x)\neq x$. Then we have, 
	\begin{equation*}
		\frac {f^{-1}(x)}{x}-1=\frac {f^{-1}(x)-f^{-1}(f(x))}{x-f(x)}\frac {x-f(x)}{x}
	\end{equation*}
	Since $f^{-1}\in G$, therefore $\displaystyle\frac {f^{-1}(x)-f^{-1}(f(x))}{x-f(x)}$ is bounded and as $f\in H$, so as $x\rightarrow\infty$, $\displaystyle\frac {f(x)-x}{x}\rightarrow 0$. Thus $f^{-1}\in H$ and that $H$ is a subgroup of $G$.\\
	To prove $H$ is normal in $G$, let $g\in G$ and $f\in H$. Then we have,
	\begin{equation}\label{Normal condition}
		\frac {g^{-1}(f(g(x)))}{x}-1=\frac {g^{-1}(f(g(x)))-g^{-1}(g(x))}{g(x)}\frac {g(x)}{x}
	\end{equation}
	Since $g\in G$, therefore as $x\rightarrow\infty$, the function $\displaystyle\frac {g(x)}{x}$ is bounded. Then by putting $g(x)=z$ and assuming $f(z)\neq z$, we get from above,
	\begin{equation*}
		\frac {g^{-1}(f(g(x)))-g^{-1}(g(x))}{g(x)}=\frac {g^{-1}(f(z))-g^{-1}(z)}{f(z)-z}\frac {f(z)-z}{z}
	\end{equation*} 
	Now, as $g\in G$, so $g^{-1}\in G$. Therefore, $\displaystyle\frac {g^{-1}(f(z))-g^{-1}(z)}{f(z)-z}$ is bounded. Again as $x\rightarrow\infty$ implies $z\rightarrow\infty$ and so $\displaystyle\frac {f(z)-z}{z}\rightarrow 0$. Then from (\ref{Normal condition}) we get, \\
	$\displaystyle\frac {g^{-1}(f(g(x)))}{x}\rightarrow 1$ as $x\rightarrow\infty$. Thus $g^{-1}fg\in H$.\\
	Now, it is clear that the functions $f(x)=kx,~k>1$ is not an element of $H$. Hence we can conclude that the set $H$ is a proper normal subgroup of $G$. 
\end{proof} 
\begin{Rem}
	One has a natural homomorphism $\psi:PL_\delta(\mathbb{R}_{+})\rightarrow QI(\mathbb{R}_{+})$, where $\psi(f)=[f]$, for all $f\in PL_\delta(\mathbb{R}_{+})$ and this homomorphism is surjective has been proven by Sankaran in \cite{Sankaran}. Furthermore, there always exists a natural epimorphism from $QI(\mathbb{R}_{+})\rightarrow QI(\mathbb{R}_{+})/H$. Therefore, we can assert that there is an onto homomorphism from $PL_\delta(\mathbb{R}_{+})\rightarrow QI(\mathbb{R}_{+})/H$. So, this leads us to a good question whether there exists a surjective homomorphism from $PL_\delta(\mathbb{R}_{+})/H\rightarrow QI(\mathbb{R}_{+})/H$.
\end{Rem}
Now, we are ready to prove Theorem \ref{Non-simple}.

\subsection{Proof of Theorem \ref{Non-simple}} Since we know that the two groups $PL_\delta(\mathbb{R}_{+})$ and $PL_\delta(\mathbb{R}_{-})$ are isomorphic and  $PL^{+}_{\delta}(\mathbb{R})=PL_\delta(\mathbb{R}_{+})\times PL_\delta(\mathbb{R}_{-})$, therefore the conclusion of Theorem \ref{Non-simple} immediately follows from the preceding lemma.\hspace{2.8cm}\qedsymbol{}\\

In the field of group theory torsion plays a pivotal role, particularly in the study of geometric groups and their structure. It helps to analyze and characterize different types of groups by examining the torsion subgroup and the order of elements. Now, we show that the group $PL_\delta(\mathbb{R}_{+})/H$ is torsion-free.

\subsection{Proof of Theorem \ref{Torsion-free}} Let us consider an arbitrary element $gH$ within the quotient group $G/H$, where $g\in G\setminus H$. To establish the torsion-free nature of the quotient group $G/H$, it suffices to show that the order of the element $gH$ is infinite, signifying that no positive integer $r$ exists for which $g^r$ belongs to $H$. \\
As $g$ belongs to $G$ but not to $H$, we can assert the existence of a sequence $\{x_n\}$ for which $\displaystyle\frac {g(x_n)}{x_n}$ does not converge to 1. To establish that $g^r$ is not an element of $H$, it is adequate to find a sequence $\{y_n\}$ such that $\displaystyle\frac {g^r(y_n)}{yn}$ does not converge to 1, for any positive integer $r$.\\ 
Now, let's consider the set $S_{g}$, which may take the form of $[m,1],[1,M]$ or $[m,M]$. \\
If $S_{g}=[m,1]$, then there exists a sequence $\{z_n\}$ such that $\displaystyle\frac {g(z_n)}{z_n}\rightarrow m<1$.\\
Now, $\displaystyle\frac {g^{r}(z_n)}{z_n}=\displaystyle\frac {g(g^{r-1}(z_n))}{g^{r-1}(z_n)}\displaystyle\frac {g(g^{r-2}(z_n))}{g^{r-2}(z_n)}\cdots \frac {g(z_n)}{z_n}$. Then there exists a subsequence $\{y_n'\}$ of $\{z_n\}$ such that $\displaystyle\frac {g^{r}(y_n')}{y_n'}\nrightarrow 1$ as $n\rightarrow\infty$. Similarly, if $S_{g}$ is represented as $[1,M]$, there also exists a sequence $\{y_n^{''}\}$ such that $\displaystyle\frac {g^{r}(y_n^{''})}{y_n^{''}}\nrightarrow 1$ as $n\rightarrow\infty$.\\
In the last case where $S_{g}=[m,M]$, with $m<1<M$, let us assume that there is an element $m'$ such that $m<m'<1<M$. Then there exists a sequence $\{x_n^{'}\}$ such that $\displaystyle\frac {g(x_n^{'})}{x_n^{'}}\rightarrow m<m'<1$ as $n\rightarrow\infty$. Consequently there exist $N_1,N_2\in \mathbb{N}$ such that $g(x_n^{'})<x_n^{'}$ for all $n\geq N_1$ and $g(x_n^{'})<m'~x_n^{'}$ for all $n\geq N_2$. Let $N=$max$\{N_1,N_2\}$.

Since $g$ is orientation preserving homeomorphism, for each $n\geq N$, we have
$g^r(x_n^{'})<g^{r-1}(x_n^{'})<\dots<g(x_n^{'})<m'~x_n^{'}$. Then we can choose a subsequence $\{y_n'''\}$ of $\{x_n^{'}\}$ such that $\displaystyle\frac {g^{r}(y_n''')}{y_n'''}\nrightarrow 1$ as $n\rightarrow\infty$.\\
Consequently, in any cases and for any positive integer $r$, we can always construct a sequence $\{y_n\}$ for which $\displaystyle\frac {g^{r}(y_n)}{y_n}\nrightarrow 1$ as $n\rightarrow\infty$. This implies that $g^{r}\notin H$ for any $r$, leading to the conclusion that the group $G/H$ is indeed torsion-free.\hspace{1.8cm}\qedsymbol{}\\

Now, initially, we establish that the center of the quasi-isometry group $QI(\mathbb{R}_{+})/H$ is trivial. Subsequently, by utilizing this result, we further confirm that the center of the group $PL_\delta(\mathbb{R}_{+})/H$ is also trivial. 
\begin{Lem}
	\textit{The center of the quasi-isometry group $QI(\mathbb{R}_{+})/H$ is trivial.} 
\end{Lem}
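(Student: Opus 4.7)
The plan is to prove this by contradiction: assume $[f]$ lies in the center of $QI(\mathbb{R}_{+})/H$ but $f\notin H$, and produce an explicit quasi-isometry $g$ whose commutator with $f$ fails to lie in $H$. Centrality of $[f]$ says $[f\circ g]=[g\circ f]$ in $QI(\mathbb{R}_{+})/H$ for every $g$, which unwinds to the asymptotic identity $f(g(x))/g(f(x))\to 1$ as $x\to\infty$ for any choice of representatives.

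Since $f\notin H$, there is a sequence $x_n\to\infty$ with $f(x_n)/x_n\to L$ for some $L\ne 1$. Replacing $f$ by $f^{-1}$ if necessary (the class $[f^{-1}]$ is central exactly when $[f]$ is), I may assume $L>1$. After passing to a subsequence I may further assume $x_{n+1}>10Lx_n$, so that the intervals $[x_n,2Lx_n]$ are pairwise disjoint and well separated.

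The main step is to construct a bi-Lipschitz $g:\mathbb{R}_{+}\to\mathbb{R}_{+}$ that behaves like the identity at each $x_n$ but stretches by a fixed factor $\lambda>1$ (say $\lambda=2$) at each $Lx_n$, the leading-order location of $f(x_n)$. Concretely, declare $g(x_n)=x_n$ and $g(Lx_n)=\lambda Lx_n$, extend $g$ linearly between consecutive breakpoints, and set $g(x)=x$ on the initial segment $[0,x_1]$. A direct computation shows the slope of $g$ equals the constant $(\lambda L-1)/(L-1)$ on each $[x_n,Lx_n]$ and tends to $1$ on each long interval $[Lx_n,x_{n+1}]$; all slopes are bounded away from $0$ and $\infty$, so $g$ is bi-Lipschitz and hence represents a class in $QI(\mathbb{R}_{+})/H$.

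Evaluating the centrality condition along $\{x_n\}$ then gives the contradiction: since $g(x_n)=x_n$ we have $f(g(x_n))/x_n=f(x_n)/x_n\to L$, while the estimate $f(x_n)=Lx_n+o(x_n)$ together with the Lipschitz property of $g$ yields $g(f(x_n))=g(Lx_n)+O(o(x_n))=\lambda Lx_n+o(x_n)$, hence $g(f(x_n))/x_n\to\lambda L$. Taking the ratio produces $f(g(x_n))/g(f(x_n))\to 1/\lambda\ne 1$, violating $[f\circ g]=[g\circ f]$ in the quotient. The main obstacle I foresee is the bookkeeping of slopes of $g$ on the long interpolation intervals $[Lx_n,x_{n+1}]$ and checking that the $o(x_n)$-error in locating $f(x_n)$ relative to $Lx_n$ is absorbed by the bi-Lipschitz constant of $g$ without perturbing the limit $1/\lambda$.
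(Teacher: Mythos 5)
Your proposal is correct and follows essentially the same strategy as the paper's proof: construct an explicit piecewise-linear, bounded-slope $g$ that fixes a sparse sequence tending to infinity while multiplicatively displacing the (approximate) location of the $f$-images, and then detect $[f\circ g]\neq[g\circ f]$ modulo $H$ along that sequence via the ratio-to-$1$ criterion for $H$-cosets. The differences are matters of detail rather than of route: the paper gets its sequence from Chakraborty's Lemma 3.1 and maps $f(a_k)$ to the midpoint of $[a_k,f(a_k)]$, whereas you extract a sequence with $f(x_n)/x_n\to L\neq 1$ directly from $f\notin H$ (reducing to $L>1$ via $f^{-1}$) and stretch by the factor $\lambda=2$ at $Lx_n$, absorbing the $o(x_n)$ discrepancy between $f(x_n)$ and $Lx_n$ by the bi-Lipschitz bound on $g$.
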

\begin{proof}
	Let $[f]$ represent a non-identity element within the quotient group $QI(\mathbb{R}_{+})/H$. Then by using Lemma 3.1 of \cite{Chakraborty}, we are able to assume the existence of a sequence $\{a_n\}$ such that $\{a_n\}\rightarrow+\infty$, $a_{n+1}>3f(a_n)$, and $f(a_n)-a_n\rightarrow+\infty$.\\
	We proceed to establish the existence of an element $[g]$ in $QI(\mathbb{R}_{+})/H$ such that $[f]$ and $[g]$ do not commute. Specifically, we have a piecewise-linear homeomorphism $g$ as defined by \cite{Chakraborty}, satisfying the following conditions:\\
	\vspace{2mm}
	$(a)$ $g(x)=x,~x\leq a_1$,\\
	\vspace{2mm}	
	$(b)~g(I_k)=I_k,~I_k:=[a_k,a_{k+1}]$,\\
	\vspace{2mm}	
	$(c)$ $g$ has exactly one break point at $f(a_k)\in I_k$ and $g(f(a_k))=\displaystyle\frac {a_k+f(a_K)}{2}$.\\
	\vspace{2mm}
	It is evident from the definition that $g'(t)\in\Big[\displaystyle\frac {1}{2},\displaystyle\frac {5}{4}\Big]$, showing that $g\in PL_\delta(\mathbb{R})$. Furthermore, we observe that $\displaystyle\lim_{k\rightarrow\infty}\frac {g(f(a_k))}{f(a_k)}=\lim_{k\rightarrow\infty}\frac {f(a_k)+a_k}{2f(a_k)}=\frac {1}{2}+\frac {1}{2}\frac {1}{\displaystyle\lim_{k\rightarrow\infty}\frac {f(a_k)}{a_k}}\nrightarrow 1.$ \\
	Thus, $[g]\in QI(\mathbb{R}_{+})/H$. Now,
	\begin{equation*}
	\Big|\frac {(f\circ g)(a_k)}{a_k}-\frac {(g\circ f)(a_k)}{a_k}\Big|=\Big|\frac {f(a_k)}{a_k}-\frac {a_k+f(a_k)}{2a_k}\Big|=\Big|\frac {f(a_k)}{2a_k}-\frac {1}{2}\Big|\nrightarrow 0. 
	\end{equation*}
Hence, in accordance with Lemma 3.4 of \cite{Bhowmik Chakraborty 2}, $[f\circ g]\neq [g\circ f]$ (mod $H$).\\
This completes the proof.		 
\end{proof}
Now, we are ready to prove \ref{Center}.
\subsection{Proof of Theorem \ref{Center}} Similar to the verification in Lemma 3.4 of \cite{Bhowmik Chakraborty 2}, it can be readily checked that for any $f,g$ in $PL_\delta(\mathbb{R}_{+})$, they belong to the same equivalece class of $H$ if and only if there exists an $M>0$ such that $\Big|\displaystyle\frac {f(x)}{x}-\displaystyle\frac {g(x)}{x}\Big|<\epsilon$, for all $x>M$ and for a given $\epsilon>0$. The conclusion of the Theorem \ref{Center} immediately follows by applying this condition and from the previous lemma.
\section*{Acknowledgement}
The author thanks Prateep Chakraborty for his valuable suggestions and comments.

\end{document}